\nonstopmode \numberwithin{equation}{section}
\newtheorem{thm}{Theorem}[section]
\newtheorem{cor}{Corollary}[section]
\newtheorem{lem}{Lemma}[section]
\theoremstyle{definition}
\newcounter{minutes}\setcounter{minutes}{\time}
\newcounter{hours}\setcounter{hours}{\time}
\newcounter {own}
\def\theown {\thesection       .\arabic{own}}
\newcounter{alphabet}
\begin{document}

\title{Schwarzian Norm Estimate for Functions in Robertson Class}

\author{Md Firoz Ali}
\address{Md Firoz Ali,
Department of Mathematics, National Institute of Technology Durgapur,
Durgapur- 713209, West Bengal, India.}
\email{ali.firoz89@gmail.com, firoz.ali@maths.nitdgp.ac.in}

\author{Sanjit Pal}
\address{Sanjit Pal,
Department of Mathematics, National Institute of Technology Durgapur,
Durgapur- 713209, West Bengal, India.}
\email{palsanjit6@gmail.com}

\subjclass[2010]{Primary 30C45, 30C55}
\keywords{univalent functions; spirallike functions; Robertson class of functions; Schwarzian norm}

\def\thefootnote{}
\footnotetext{ {\tiny File:~\jobname.tex,
printed: \number\year-\number\month-\number\day,
          \thehours.\ifnum\theminutes<10{0}\fi\theminutes }
} \makeatletter\def\thefootnote{\@arabic\c@footnote}\makeatother

\begin{abstract}
Let $\mathcal{A}$ denote the class of analytic functions $f$ in the unit disk $\mathbb{D}=\{z\in\mathbb{C}:|z|<1\}$ normalized by $f(0)=0$, $f'(0)=1$. For $-\pi/2<\alpha<\pi/2$, let $\mathcal{S}_{\alpha}$ be the subclass of $\mathcal{A}$ consisting of functions $f$ that satisfy the relation  ${\rm Re\,} \{e^{i\alpha}(1+zf''(z)/f'(z))\}>0$ for $z\in\mathbb{D}$. In the present article, we determine the sharp estimate of the pre-Schwarzian and Schwarzian norms for functions in the class $\mathcal{S}_{\alpha}$.
\end{abstract}

\thanks{}

\maketitle
\pagestyle{myheadings}
\markboth{Md Firoz Ali and Sanjit Pal}{Schwarzian norm estimate of Robertson class}

\section{Introduction}
Let $\mathcal{H}$ denote the class of analytic functions in the unit disk $\mathbb{D}=\{z\in\mathbb{C}:|z|<1\}$, and  $\mathcal{LU}$ denote the subclass of $\mathcal{H}$ consisting of all locally univalent functions, namely, $\mathcal{LU}=\{f\in\mathcal{H}: f'(z)\ne 0, z\in\mathbb{D}\}$. The pre-Schwarzian and Schwarzian derivatives for a function $f\in\mathcal{LU}$ are defined by
$$P_f(z)=\frac{f''(z)}{f'(z)}\quad \text{and}\quad S_f(z)=\left(\frac{f''(z)}{f'(z)}\right)^{'}-\frac{1}{2}\left(\frac{f''(z)}{f'(z)}\right)^2,$$
respectively. Also, the pre-Schwarzian and Schwarzian norms (the hyperbolic sup-norms) of $f\in\mathcal{LU}$ are defined by
$$||P_f||=\sup\limits_{z\in\mathbb{D}}(1-|z|^2)|P_f
(z)| \quad \text{and}\quad ||S_f||=\sup\limits_{z\in\mathbb{D}}(1-|z|^2)^2|S_f(z)|,$$
respectively. These norms have significant meanings in the theory of Teichm\"{u}ller spaces (see \cite{Lehto-1987}). For a univalent function $f$ in $\mathcal{LU}$, it is well known that $||P_f||\leq 6$ and $||S_f||\leq 6$ (see \cite{Kraus-1932,Nehari-1949}) and these estimates are best possible. On the other hand, for a locally univalent function $f$ in $\mathcal{LU}$, it is also known that if $||P_f||\leq 1$ (see \cite{Becker-1972,Becker-Pommerenke-1984}) or $||S_f||\leq 2$ (see \cite{Nehari-1949}), then the function $f$ is univalent in $\mathbb{D}$. In 1976, Yamashita \cite{Yamashita-1976} proved that $||P_f ||$ is finite if and only if $f$ is uniformly locally univalent in $\mathbb{D}$. Furthermore, if $||P_f||<2$ then $f$ is bounded in $\mathbb{D}$ (see \cite{Kim-Sugawa-2002}). The pre-Schwarzian norm has also been studied by many researchers (see, for example, \cite{Ali-Sanjit-2022,Sugawa-1998,Yamashita-1999} and references therein).\\

Let us introduce one of the most important and useful tool known as differential subordination technique. In geometric function theory, many problem can be solved in a simple and sharp manner with the help of differential subordination. A function $f\in\mathcal{H}$ is said to be subordinate to another function $g\in\mathcal{H}$ if there exists an analytic function $\omega:\mathbb{D}\rightarrow\mathbb{D}$ with $w(0)=0$ such that $f(z)=g(\omega(z))$ and it is denoted by $f\prec g$. Moreover, when $g$ is univalent, then $f\prec g$ if and only if $f(0)=g(0)$ and $f(\mathbb{D})\subset g(\mathbb{D})$.\\

Let $\mathcal{A}$ denote the subclass of $\mathcal{H}$ consisting of functions $f$ with the normalized conditions $f(0)=f'(0)-1=0$. Thus, any function $f$ in $\mathcal{A}$ has the Taylor series expansion of the form
\begin{equation}\label{s-00001}
f(z)=z+\sum\limits_{n=2}^{\infty}a_nz^n.
\end{equation}
Let $\mathcal{S}$ be the set of all univalent functions $f\in\mathcal{A}$ in $\mathbb{D}$.
A function $f\in\mathcal{A}$ is said to be starlike of order $\alpha$ ($0\le \alpha< 1$) if ${\rm Re\,} (zf'(z)/f(z)) > \alpha$ for $z\in\mathbb{D}$. A function $f\in\mathcal{A}$ is said to be convex of order $\alpha$ ($0\le \alpha< 1$) if  ${\rm Re\,} (1 + zf''(z)/f'(z))> \alpha$ for $z\in\mathbb{D}$. The classes of all starlike and convex functions of order $\alpha$ in $\mathcal{S}$ are denoted by $\mathcal{S}^*(\alpha)$ and $\mathcal{C}(\alpha)$, respectively. Clearly, a function $f$ in $\mathcal{A}$ belongs to $\mathcal{C}(\alpha)$ if and only if $zf'\in\mathcal{S}^*(\alpha)$. Moreover, $\mathcal{S}^*(0)=:\mathcal{S}^*$ and $\mathcal{C}(0)=:\mathcal{C}$ are the class of starlike and convex functions, respectively. For further information on these classes, we refer to \cite{Duren-1983,Goodman-book-1983}.\\

A domain $\Omega$ containing the origin is called $\alpha$-spirallike if for each point $w_0$ in $\Omega$ the arc of the $\alpha$-spiral from origin to the point $w_0$ entirely lies in $\Omega.$ A function $f\in\mathcal{A}$ is said to be an $\alpha$-spirallike if
$${\rm Re\,}\left(e^{i\alpha}\frac{zf'(z)}{f(z)}\right)>0\quad\text{for }z\in\mathbb{D},$$
where $|\alpha|<\pi/2$. In 1933, \v{S}pa\v{c}ek \cite{Spacek-1933} introduced and studied the class of $\alpha$-spirallike functions and this class is denoted by $\mathcal{SP}(\alpha)$. Later on, Robertson \cite{Robertson-1969} introduced a new class of functions, denoted by $\mathcal{S}_{\alpha}$, in connection with $\alpha$-spirallike functions. A function $f\in \mathcal{A}$ is in the class $\mathcal{S}_{\alpha}$ if and only if
$${\rm Re\,} \left\{e^{i\alpha}\left(1+\frac{zf''(z)}{f'(z)}\right)\right\}>0\quad\text{for }z\in\mathbb{D}.$$
We note that $f\in \mathcal{A}$ is in the class $\mathcal{S}_{\alpha}$ if and only if $zf'(z)\in \mathcal{SP}(\alpha)$.  In terms of subordination, for a function $f\in\mathcal{A}$, we have
\begin{equation}\label{s-00010}
f\in\mathcal{S}_{\alpha}\iff e^{i\alpha}\left(1+\frac{zf''(z)}{f'(z)}\right)\prec \frac{e^{i\alpha}+e^{-i\alpha}z}{1-z}.
\end{equation}
For $\alpha=0$, the class $\mathcal{S}_{\alpha}$ reduces to classical class of convex functions. For general values of $\alpha$, Robertson \cite{Robertson-1969} proved that functions in the class $\mathcal{S}_{\alpha}$ need not be univalent in $\mathbb{D}$. By using Nehari's test, Robertson \cite{Robertson-1969} also proved that functions in the class $\mathcal{S}_{\alpha}$ is univalent in $\mathbb{D}$ if $\alpha$ satisfies the inequality $0<\cos{\alpha}\le x_0\approx  0.2034\cdots$, where $x_0$ denotes the positive root of the equation
$$16x^3+16x^2+x-1=0.$$
Later on, Libera and Ziegler \cite{Libera-Ziegler-1972} improved the range of $\alpha$ to $0<\cos{\alpha}\le 0.2564\cdots$ in 1972. In 1975, Chichra \cite{Chichra-1975} improved this range to $0<\cos{\alpha}\le 0.2588\cdots$. Surprisingly, in the same year, Pfaltzgraff \cite{Pfaltzgraff-1975} proved that functions in $\mathcal{S}_{\alpha}$ are univalent if $0<\cos{\alpha}\le 1/2$. Till now, this is the best improvement result of $\alpha$ for which functions in $\mathcal{S}_{\alpha}$ are univalent in $\mathbb{D}$. In 1977, Singh and Chichra \cite{Singh-Chichra-1977} proved that if $f\in\mathcal{S}_{\alpha}$ with $f''(0)=0$, then the function $f$ is univalent in $\mathbb{D}$ for all values of $\alpha$, where $|\alpha|<\pi/2$.\\

The pre-Schwarzian as well as Schwarzian derivatives are popular tools for studying the geometric properties of analytic mappings. But because of the computational difficulty, the pre-Schwarzian norm has received more attention than the Schwarzian norm. Although the classical work on the Schwarzian derivative in relation to the geometric function theory was done in \cite{Kuhnau-1971, Nehari-1949}, for various subclasses of locally univalent functions, not much more research has been done on Schwarzian  derivative. In connection with Teichm\"{u}ller spaces, estimating the Schwarzian norm for typical subclasses of locally univalent functions is an interesting problem. For the class of convex functions $\mathcal{C}$, the Schwarzian norm satisfies $||S_f||\le 2$ and the estimate is sharp. This result was proved repeatedly by many researchers (see \cite{Lehto-1977, Nehari-1976, Robertson-1969}). In 1996, Suita \cite{Suita-1996} studied the class $\mathcal{C}(\alpha)$, $0\le \alpha< 1$ and proved that the Schwarzian norm satisfies the following sharp inequality
$$||S_f||\le
\begin{cases}
2 & \text{ if }~ 0\le \alpha\le 1/2,\\
8\alpha(1-\alpha) & \text{ if }~ 1/2< \alpha<1.
\end{cases}
$$

A function $f\in\mathcal{A}$ is said to be strongly starlike (respectively, strongly convex) of order $\alpha$, $0<\alpha\le 1$ if $|\arg\{zf'(z)/f(z)\}|<\pi\alpha/2$ (respectively, $|\arg\{1+zf''(z)/f'(z)\}|<\pi\alpha/2$) for $z\in\mathbb{D}$. The classes of all strongly starlike and strongly convex functions of order $\alpha$ are denoted by $\mathcal{S}^*_{\alpha}$ and $\mathcal{K}_{\alpha}$, respectively. Chiang \cite{Chiang-1991} studied the Schwarzian norm for the class $\mathcal{S}^*_{\alpha}$ and proved the sharp inequality $||S_f||\le 6\sin(\pi\alpha/2)$. Later on, Kanas and Sugawa \cite{Kanas-Sugawa-2011} studied the Schwarzian norm for the class $\mathcal{K}_{\alpha}$ and proved the sharp inequality $||S_f||\leq 2\alpha$.\\

A function $f\in \mathcal{A}$ is said to be uniformly convex function if every circular arc (positively oriented) of the form $\{z\in\mathbb{D}: |z -\xi|=r\}$, $\xi\in\mathbb{D}$, $0<r<|\xi|+1$ is mapped by $f$ univalently onto a convex arc. The class of all uniformly convex functions is denoted by $\mathcal{UCV}$. In particular, $\mathcal{UCV}\subset\mathcal{C}$.
It is well known that (see \cite{Goodman-1991,Ma-Minda-1992}) a function $f\in \mathcal{A}$ is uniformly convex if and only if
$${\rm Re\,}\left(1+\frac{zf''(z)}{f'(z)}\right)> \left|\frac{zf''(z)}{f'(z)}\right|~~\text{for}~ z\in\mathbb{D}.$$
Kanas and Sugawa \cite{Kanas-Sugawa-2011} proved that the Schwarzian norm satisfies $||S_f||\le 8/\pi^2$ for $f\in\mathcal{UCV}$ and the estimate is sharp. In 2012, Bhowmik and Wirths \cite{Bhowmik-Wirths-2012} studied the class of concave functions $\mathcal{C}o(\alpha)$ for $1\le \alpha\le 2$ and obtained the sharp estimate $||S_f||\le 2(\alpha^2-1)$ for $f\in\mathcal{C}o(\alpha)$.  Recently, the present authors \cite{Ali-Sanjit-2022a} considered the classes of functions $\mathcal{G}(\beta)$ with $\beta>0$ and $\mathcal{F}(\alpha)$ with $-1/2\le \alpha\le 0$, consisting of functions in $\mathcal{A}$ that satisfy the relation ${\rm Re\,}\left(1+zf''(z)/f'(z)\right)<1+\beta/2$ for $z\in\mathbb{D}$, and ${\rm Re\,}\left(1+zf''(z)/f'(z)\right)>\alpha$ for $z\in\mathbb{D}$, respectively, and obtained the sharp estimates $||S_f||\le 2\beta(\beta+2)$ for $f\in\mathcal{G}(\beta)$ and $||S_f||\le 2(1-\alpha)/(1+\alpha)$ for $f\in\mathcal{F}(\alpha)$. A function $f\in\mathcal{A}$ is said to be  Janowski convex function if $1+zf''(z)/f'(z)\prec(1+Az)/(1+Bz)$, where $-1\le B<A\le 1$. The class of all Janowski convex functions is denoted by $\mathcal{C}(A,B)$ (see \cite{Janowski-1973a,Janowski-1973b}). Also, the present authors \cite{Ali-Sanjit-2022b} studied the class $\mathcal{C}(A,B)$ and obtained the sharp estimate of the Schwarzian norm for this class.\\

In this article, we mainly focus on the class $\mathcal{S}_{\alpha}$ and our aim is to find the estimate of the modulus of the Schwarzian derivative for functions in the class $\mathcal{S}_{\alpha}$ ($|\alpha|<\pi/2$). This result will yield a sharp estimate of the Schwarzian norm for functions in this class. Also, we determine the sharp estimate of the pre-Schwarzian norm for functions in the class  $\mathcal{S}_{\alpha}$, where $|\alpha|<\pi/2$.

\section{Main Results}

Let $\mathcal{B}$ be the class of analytic functions $\omega:\mathbb{D}\rightarrow\mathbb{D}$ and $\mathcal{B}_0$ be the class of Schwarz functions $\omega\in\mathcal{B}$ with $\omega(0)=0$. The Schwarz's lemma states that if $\omega\in\mathcal{B}_0$, then $|\omega(z)|\le |z|$ and $|\omega'(0)|\le 1$. The equality occurs in any one of the inequalities if and only if $\omega(z)=e^{i\alpha}z$, $\alpha\in\mathbb{R}$. An extension of Schwarz lemma, known as Schwarz-Pick lemma, yields the estimate $|\omega'(z)|\le (1-|\omega(z)|^2)/(1-|z|^2)$, $z\in\mathbb{D}$ when $\omega\in\mathcal{B}$. In 1931, Dieudonn\'{e} \cite{Dieudonne-1931} first obtain the precise region of variability of $\omega'(z_0)$ for a fixed $z_0\in\mathbb{D}$ over the class $\mathcal{B}_0$.

\begin{lem}[Dieudonn\'{e}'s lemma]\cite{Dieudonne-1931, Duren-1983}
Let $\omega\in\mathcal{B}_0$ and $z_0\ne 0$ be a fixed point in $\mathbb{D}$. The region of variability of $\omega'(z_0)$ is given by
\begin{equation}\label{s-00005}
\left|\omega'(z_0)-\frac{\omega(z_0)}{z_0}\right|\le \frac{|z_0|^2-|\omega(z_0)|^2}{|z_0|(1-|z_0|^2)}.
\end{equation}
Moreover, the equality occurs in \eqref{s-00005} if and only if $\omega$ is a Blaschke product of degree $2$ fixing $0$.
\end{lem}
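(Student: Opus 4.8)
The plan is to reduce the statement to the Schwarz--Pick lemma by dividing out the zero of $\omega$ at the origin. Since $\omega\in\mathcal{B}_0$ satisfies $\omega(0)=0$, the function $g(z)=\omega(z)/z$ extends analytically to all of $\mathbb{D}$, and Schwarz's lemma gives $|g(z)|\le 1$ there; by the maximum principle $g$ maps $\mathbb{D}$ into $\mathbb{D}$ unless it is a unimodular constant, so in either case $g$ belongs to $\mathcal{B}$. The point of introducing $g$ is that it converts the quantity $\omega'(z_0)$ into an ordinary derivative to which Schwarz--Pick applies directly.

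First I would differentiate the relation $\omega(z)=z\,g(z)$ to obtain $\omega'(z)=g(z)+z\,g'(z)$ and evaluate at $z_0$. Since $g(z_0)=\omega(z_0)/z_0$, this yields the identity $\omega'(z_0)-\omega(z_0)/z_0=z_0\,g'(z_0)$, which is exactly the quantity on the left-hand side of \eqref{s-00005}. Thus the whole problem is transferred to bounding $|g'(z_0)|$.

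Next I would apply the Schwarz--Pick lemma to $g$, giving $|g'(z_0)|\le (1-|g(z_0)|^2)/(1-|z_0|^2)$. Substituting $g(z_0)=\omega(z_0)/z_0$ and using $1-|g(z_0)|^2=(|z_0|^2-|\omega(z_0)|^2)/|z_0|^2$, and then multiplying through by $|z_0|$, collapses the estimate precisely into the bound \eqref{s-00005}. This part is a routine computation once the identity of the previous step is in hand.

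The more delicate part is the equality characterization. Equality in Schwarz--Pick at the interior point $z_0$ forces $g$ to be a conformal automorphism of $\mathbb{D}$, say $g(z)=e^{i\gamma}(z-a)/(1-\bar a z)$ with $a\in\mathbb{D}$ and $\gamma\in\mathbb{R}$; multiplying back by $z$ gives $\omega(z)=e^{i\gamma}z\,(z-a)/(1-\bar a z)$, a Blaschke product of degree $2$ vanishing at the origin, and conversely each such product makes $g$ an automorphism and hence turns \eqref{s-00005} into an equality. The main obstacle I anticipate is stating the equality case cleanly: one must invoke the sharp form of Schwarz--Pick (equality at a single interior point already forces an automorphism) and keep track of the degenerate rotation $\omega(z)=e^{i\gamma}z$, for which $g$ is a unimodular constant and both sides of \eqref{s-00005} vanish, when deciding exactly which $\omega$ realize equality.
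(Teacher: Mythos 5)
Your argument is correct, and it is the classical proof of Dieudonn\'{e}'s lemma. Note that the paper itself offers no proof of this statement: it is quoted as a known result with citations to Dieudonn\'{e} and to Duren's book, so there is nothing internal to compare against. Your reduction --- factor $\omega(z)=z\,g(z)$ with $g\in\mathcal{B}$ via Schwarz's lemma, observe $\omega'(z_0)-\omega(z_0)/z_0=z_0\,g'(z_0)$, and apply Schwarz--Pick to $g$ --- is exactly the standard route found in the cited sources, and the equality analysis (Schwarz--Pick is an equality at an interior point iff $g$ is a disk automorphism, whence $\omega(z)=e^{i\gamma}z(z-a)/(1-\bar a z)$ is a degree-$2$ Blaschke product fixing $0$) is right. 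Your caution about the degenerate case is well placed: when $\omega(z)=e^{i\gamma}z$ the function $g$ is a unimodular constant, both sides of \eqref{s-00005} vanish, and equality holds even though $\omega$ has degree $1$; so the ``only if'' clause of the lemma as literally stated requires either excluding this trivial case or reading the equality assertion as describing when the boundary of the nondegenerate variability disk is attained. This does not affect the way the lemma is used later in the paper, where the extremal $\omega$ is constructed explicitly as a degree-$2$ Blaschke product.
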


The Dieudonn\'{e}'s lemma is an extension of Schwarz's lemma as well as Schwarz-Pick lemma. Here, we remark that a Blaschke product of degree $n\in\mathbb{N}$ is of the form
$$B(z)=e^{i\theta}\prod_{j=1}^{n}\frac{z-z_j}{1-\bar{z_j}z},\quad z,z_j\in\mathbb{D}, ~\theta\in\mathbb{R}.$$
The Dieudonn\'{e}'s lemma will play an important role to prove our main result and we also use this lemma to construct the extremal functions by using Blaschke product.\\

\begin{thm}\label{Thm-s-0005}
For $-\pi/2<\alpha<\pi/2$, let $f\in\mathcal{S}_{\alpha}$. The Schwarzian derivative $S_f$ satisfy the following inequalities:
\begin{enumerate}
\item[(i)] If  $|\alpha|\le\pi/6$ and $z\in\mathbb{D}$, then
\begin{equation}\label{s-00015}
|S_f(z)|\le \dfrac{2\cos{\alpha}(1-(1-|z|^2)\sin{|\alpha|})}{(1-|z^2|)^2(1-\sin{|\alpha|})}.
\end{equation}
\item[(ii)] If $|\alpha|>\pi/6$ and $z\in\mathbb{D}$, then
\begin{equation}\label{s-00020}
|S_f(z)|\le
\begin{cases}
\dfrac{2\cos{\alpha}(1-(1-|z|^2)\sin{|\alpha|})}{(1-|z^2|)^2(1-\sin{|\alpha|})}& ~\text{for}~|z|<\delta,\\[3mm]
\dfrac{2\cos{\alpha}\sin{|\alpha|}}{(1-|z|)^2}&~ \text{for}~|z|\ge\delta,
\end{cases}
\end{equation}
where
\begin{equation}\label{s-00025}
\delta=\frac{1-\sin{|\alpha|}}{\sin{|\alpha|}}.
\end{equation}
\end{enumerate}
\end{thm}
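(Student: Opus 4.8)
The plan is to convert the subordination \eqref{s-00010} into an explicit formula for the pre-Schwarzian in terms of a Schwarz function, and then to exploit the fact that the Schwarzian depends \emph{affinely} on $\omega'$. First I would write the subordination as $e^{i\alpha}(1+zf''/f')=(e^{i\alpha}+e^{-i\alpha}\omega)/(1-\omega)$ for some $\omega\in\mathcal{B}_0$. Solving for $zf''/f'$ and using the identity $1+e^{-2i\alpha}=2\cos\alpha\,e^{-i\alpha}$ gives
$$P_f(z)=\frac{f''(z)}{f'(z)}=\frac{c\,\omega(z)}{z\,(1-\omega(z))},\qquad c:=2\cos\alpha\,e^{-i\alpha}.$$
Differentiating and forming $S_f=P_f'-\tfrac12 P_f^2$, the $\omega\omega'$ cross terms produced by the quotient rule cancel, leaving
$$S_f(z)=\frac{1}{z^2(1-\omega)^2}\Bigl[c\,z\,\omega'-c\,\omega+\bigl(c-\tfrac12 c^2\bigr)\omega^2\Bigr].$$

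The key structural observation is that, for a fixed $z_0$, this is an affine function of $\omega'(z_0)$. By Dieudonn\'e's lemma the admissible values of $\omega'(z_0)$ fill a disk centred at $\omega(z_0)/z_0$ of radius $(|z_0|^2-|\omega(z_0)|^2)/(|z_0|(1-|z_0|^2))$, so $S_f(z_0)$ itself ranges over a disk whose maximal modulus is the distance of its centre from the origin plus its radius. Substituting the centre $\omega(z_0)/z_0$ for $\omega'$ makes the linear terms cancel, so the centre of the $S_f$-disk equals $(c-\tfrac12 c^2)\omega^2/(z_0^2(1-\omega)^2)$. Writing $w=\omega(z_0)$ and using $|c|=2\cos\alpha$ together with $c-\tfrac12 c^2=i\sin(2\alpha)e^{-2i\alpha}$, whence $|c-\tfrac12 c^2|=2\cos\alpha\sin|\alpha|$, I obtain
$$|S_f(z_0)|\le\frac{2\cos\alpha}{|z_0|^2|1-w|^2}\left[\sin|\alpha|\,|w|^2+\frac{|z_0|^2-|w|^2}{1-|z_0|^2}\right].$$

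The remaining task is to maximise this over the admissible $w$, i.e.\ $|w|\le|z_0|$. For fixed $|w|=\rho$ the only dependence on $\arg w$ is through $|1-w|^2$ in the denominator, which is smallest at $w=\rho>0$; so it suffices to take $w$ real and maximise the resulting one-variable expression $F(\rho)$ on $[0,r]$, where $r=|z_0|$. A short computation shows $F'(\rho)$ has the sign of $a\rho+b$ with $a=\sin|\alpha|-1/(1-r^2)<0$ and $b=r^2/(1-r^2)>0$, so $F$ rises to a unique critical point $\rho^{*}=r^2/(1-(1-r^2)\sin|\alpha|)$ and then falls. One checks that $\rho^{*}\le r$ exactly when $r\le(1-\sin|\alpha|)/\sin|\alpha|=\delta$. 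Hence when $r\le\delta$ (which always holds for $|\alpha|\le\pi/6$, since then $\delta\ge1$) the maximum is at $\rho=\rho^{*}$, and evaluating $F$ there—using $a\rho^{*}+b=0$ to simplify the numerator—gives precisely the right-hand side of \eqref{s-00015}. When $r>\delta$ the maximum sits at the endpoint $\rho=r$, where $ar^2+b=\sin|\alpha|\,r^2$ collapses the bound to $2\cos\alpha\sin|\alpha|/(1-r)^2$, which is the second branch of \eqref{s-00020}.

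I expect the main obstacle to be the bookkeeping in the two optimisation stages: first justifying that the extremal argument is $\arg w=0$, and then carrying out the critical-point analysis and verifying that the threshold separating the interior maximum from the boundary maximum is exactly $\delta$. Sharpness in each regime should follow by taking $\omega$ to be the degree-two Blaschke product fixing $0$ that realises equality in Dieudonn\'e's lemma at the optimal value of $w$, as indicated in the discussion after that lemma.
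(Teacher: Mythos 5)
Your proposal is correct and takes essentially the same route as the paper: the same subordination representation, the same application of Dieudonn\'e's lemma to the affine dependence of $S_f(z_0)$ on $\omega'(z_0)$ (the paper phrases this via a substitution $\zeta(z)=e^{i(\alpha+\pi/2)}(\omega'(z)-\omega(z)/z)$ followed by the triangle inequality, which yields the identical bound), and the same one-variable maximisation with critical point $\rho^{*}=r^{2}/(1-(1-r^{2})\sin|\alpha|)$ and threshold $\delta$. All the computations you outline check out, so there is no gap.
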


\begin{proof}
For $-\pi/2<\alpha<\pi/2$, let $f\in\mathcal{S}_{\alpha}$. Then from \eqref{s-00010}, we have
$$e^{i\alpha}\left(1+\frac{zf''(z)}{f'(z)}\right)\prec \frac{e^{i\alpha}+e^{-i\alpha}z}{1-z},$$
and so there exists an analytic function $\omega:\mathbb{D}\rightarrow\mathbb{D}$ with $\omega(0)=0$ such that
$$e^{i\alpha}\left(1+\frac{zf''(z)}{f'(z)}\right)= \frac{e^{i\alpha}+e^{-i\alpha}\omega(z)}{1-\omega(z)}.$$
A simple computation gives
\begin{equation}\label{s-00030}
\frac{f''(z)}{f'(z)}=\frac{2e^{-i\alpha}\cos{\alpha}\omega(z)}{z(1-\omega(z))},
\end{equation}
and therefore,
\begin{align}\label{s-00035}
S_f(z)&=\left(\frac{f''(z)}{f'(z)}\right)^{'}-\frac{1}{2}\left(\frac{f''(z)}{f'(z)}\right)^2\\
&=\frac{2e^{-i\alpha}\cos{\alpha}}{z^2}\left(\frac{z\omega'(z)-\omega(z)+ie^{-i\alpha}\sin{\alpha}\omega^2(z)}{(1-\omega(z))^2}\right).\nonumber
\end{align}
Let us consider the transformation $\displaystyle e^{-i(\alpha+\pi/2)}\zeta(z)=\omega'(z)-\frac{\omega(z)}{z}$. By \eqref{s-00005}, the function $\zeta$ varies over the closed disk
$$|\zeta(z)|\le \frac{|z|^2-|\omega(z)|^2}{|z|(1-|z|^2)}\quad\text{for fixed }|z|<1.$$
Using the transformation $\zeta(z)$ in \eqref{s-00035}, we obtain
\begin{equation}\label{s-00038}
S_f(z)=\frac{2ie^{-2i\alpha}\cos{\alpha}}{z^2}\left(\frac{-z\zeta(z)+\sin{\alpha}\omega^2(z)}{(1-\omega(z))^2}\right),
\end{equation}
and hence,
\begin{align*}
|S_f(z)|&\le 2\cos{\alpha}\left(\frac{|\zeta(z)|}{|z||1-\omega(z)|^2}+\frac{|\omega(z)|^2\sin{|\alpha|}}{|z|^2|1-\omega(z)|^2}\right)\\
&\le 2\cos{\alpha}\left(\frac{|z|^2-|\omega(z)|^2}{|z|^2(1-|z|^2)|1-\omega(z)|^2}+\frac{|\omega(z)|^2\sin{|\alpha|}}{|z|^2|1-\omega(z)|^2}\right).
\end{align*}
For $0\le s:=|\omega(z)|\le |z|<1$, we obtain
\begin{align}\label{s-00040}
|S_f(z)|&\le 2\cos{\alpha}\left(\frac{|z|^2-s^2}{|z|^2(1-|z|^2)(1-s)^2}+\frac{s^2\sin{|\alpha|}}{|z|^2(1-s)^2}\right)\\
&=2\cos{\alpha}\left(\frac{|z|^2-s^2(1-(1-|z|^2)\sin{|\alpha|})}{|z|^2(1-|z|^2)(1-s)^2}\right)=2\cos{\alpha}~ g(s),\nonumber
\end{align}
where
\begin{equation}\label{s-00045}
g(s)=\frac{|z|^2-s^2(1-(1-|z|^2)\sin{|\alpha|})}{|z|^2(1-|z|^2)(1-s)^2}, \quad 0\le s\le |z|<1.
\end{equation}
Now, we wish to find the maximum value of $g(s)$ in $[0,|z|]$. To do this, we have to find the critical value of $g(s)$ in $(0,|z|)$. A simple calculation gives,
\begin{equation*}
g'(s)=\frac{2(|z|^2-s(1-(1-|z|^2)\sin{|\alpha|}))}{|z|^2(1-|z|^2)(1-s)^3},
\end{equation*}
and so, $g'(s)=0$ yields
$$s=\frac{|z|^2}{1-(1-|z|^2)\sin{|\alpha|}}=:s_0(|z|).$$
Now, we have to check for what values of $\alpha$ the point $s_0(|z|)$ lies in $(0,|z|)$. We note that the inequality
\begin{equation}\label{s-00050}
s_0(|z|)=\frac{|z|^2}{1-(1-|z|^2)\sin{|\alpha|}}<|z|,
\end{equation}
holds if and only if $h(|z|)>0$, where
\begin{equation}\label{s-00053}
h(t)=\sin{|\alpha|}t^2-t+1-\sin{|\alpha|}.
\end{equation}
The roots of $h(t)=0$ are given by $t_1=\delta=(1-\sin{|\alpha|})/\sin{|\alpha|}$ and $t_2=1$. The root $\delta$ lies in $(0,1)$ if and only if $|\alpha|>\pi/6$. We also note that $h(0)=1-\sin{|\alpha|}>0$.
 Now, we complete the proof by considering two different cases.\\

\noindent\textbf{Case-I:} Let $|\alpha|\le \pi/6$. In this case, $h(|z|)>0$ in $(0,1)$ and so, $s_0(|z|)$ lies in $(0,|z|)$. Since, the numerator of $g'(s)$ is a linear function of $s$, and $g'(0)=2/(1-|z|)^2>0$, $g'(s_0(|z|))=0$, it follows that $g'(|z|)<0$. Therefore, the function $g$ is strictly increasing in $(0,s_0(|z|))$ and strictly decreasing in $(s_0(|z|),|z|)$ and therefore, $g$ attain its maximum at $s_0(|z|)$. Therefore, from \eqref{s-00040} and \eqref{s-00045}, we get
\begin{equation*}
|S_f(z)|\le \dfrac{2\cos{\alpha}(1-(1-|z|^2)\sin{|\alpha|})}{(1-|z^2|)^2(1-\sin{|\alpha|})}.
\end{equation*}

\noindent\textbf{Case-II:} Let $|\alpha|>\pi/6$. In this case, $h(|z|)>0$ for $|z|<\delta$ and $h(|z|)\le 0$ for $|z|\ge\delta$. Thus, for $|z|<\delta$, following the same argument as Case-I,
the function $g$ is strictly increasing in $(0,s_0(|z|))$ and strictly decreasing in $(s_0(|z|),|z|)$ and therefore, $g$ attain its maximum at $s_0(|z|)$. Thus, the first inequality of \eqref{s-00020} follows from \eqref{s-00040} and \eqref{s-00045}. Since $h(|z|)\le 0$ for $|z|\ge \delta$, it follows that $\sin|\alpha| (1-|z|^2)\ge 1-|z|$. Thus, for $|z|\ge \delta$,  from \eqref{s-00045}, we obtain
 $$g(|z|)=\frac{\sin{|\alpha|}}{(1-|z|)^2} = \frac{\sin{|\alpha|}}{(1-|z|)^2}(1-|z|^2)g(0) \ge \frac{g(0)}{1-|z|}\ge g(0).$$
Thus, the second inequality of \eqref{s-00020} follows from \eqref{s-00040} and \eqref{s-00045}.\\

\end{proof}

Before we move to our next result, we discuss the sharpness of the estimate of $|S_f(z)|$ obtained in Theorem \ref{Thm-s-0005}.\\

Suppose that $\alpha$ and $z_0$ satisfy either of the following conditions:
\begin{enumerate}
\item [\textbf{(C1)}] \quad $|\alpha|\le \pi/6$ and $-1<z_0<1$,
\item [\textbf{(C2)}] \quad $|\alpha|> \pi/6$ and $-1<z_0<1$ with $|z_0|<\delta$, where $\delta$ is given by \eqref{s-00025}.
\end{enumerate}
Depending on $\alpha$, we choose a unimodular real number $p$ as follows:
\begin{align}\label{s-00055}
p=
\begin{cases}
-1 & \text{when}~ 0\le\alpha< \frac{\pi}{2},\\
1 & \text{when}~ -\frac{\pi}{2}<\alpha<0.
\end{cases}
\end{align}
For given $\alpha$ and $z_0$ satisfying \textbf{(C1)} or \textbf{(C2)}, we choose $p$ as above and consider the function $f_{z_0,p}$ defined by
\begin{equation}\label{s-00060}
e^{i\alpha}\left(1+\frac{zf_{z_0,p}''(z)}{f_{z_0,p}'(z)}\right)=\frac{e^{i\alpha}+e^{-i\alpha}\phi(z)}{1-\phi(z)},
\end{equation}
where
$$\phi(z)=\frac{pz(z-b)}{1-bz},$$
and $b$ is a solution of the equation
\begin{align}\label{s-00065}
&\frac{pz_0(z_0-b)}{1-bz_0}=\frac{z_0^2}{1+p(1-z_0^2)\sin{\alpha}}\nonumber\\
\text{i.e.,} \quad &b=\frac{z_0(1-p-\sin{\alpha}+z_0^2\sin{\alpha})}{-p+z_0^2-\sin{\alpha}+z_0^2\sin{\alpha}}.
\end{align}
We know that when $\alpha$ and $z_0$ satisfy \textbf{(C1)} or \textbf{(C2)}, the point $s_0(|z_0|)$ lies in $[0,|z|)$, where $s_0(|z_0|)$ is given by \eqref{s-00050} and therefore, $s_0(|z_0|)=|z_0|^2/(1+p(1-|z_0|^2)\sin{\alpha})$ lies in $[0,1)$. This lead us to conclude that $b$ lies in $(-1,1)$ and $\phi$ is a Blaschke product of degree $2$ fixing $0$. Therefore,  $f_{z_0,p}$ belongs to the class $\mathcal{S}_{\alpha}$. Thus, from \eqref{s-00038}, the Schwarzian derivative of $f_{z_0,p}$ is given by
\begin{align*}
S_{f_{z_0,p}}(z)&=\left(\frac{f''_{z_0,p}(z)}{f'_{z_0,p}(z)}\right)^{'}-\frac{1}{2}\left(\frac{f''_{z_0,p}(z)}{f'_{z_0,p}(z)}\right)^2\\
&=2ie^{-2i\alpha}\cos{\alpha}\left(\frac{-z\zeta(z)+\sin{\alpha}\phi^2(z)}{z^2(1-\phi(z))^2}\right),
\end{align*}
where $\zeta(z)$ is given by
$$\zeta(z)=e^{i(\alpha+\pi/2)}\left(\phi'(z)-\frac{\phi(z)}{z}\right).$$
Therefore, the Schwarzian derivative $S_{f_{z_0,p}}(z)$ at $z_0$ is given by
\begin{align*}
S_{f_{z_0,p}}(z_0)&=2ie^{-2i\alpha}\cos{\alpha}\left(\frac{-z_0\zeta(z_0)+\sin{\alpha}\phi^2(z_0)}{z_0^2(1-\phi(z_0))^2}\right)\\
&=2ie^{-2i\alpha}\cos{\alpha}\left(\frac{-p(1-b^2)+\sin{\alpha}(z_0-b)^2}{(1-bz_0-pz_0(z_0-b))^2}\right).
\end{align*}
Substituting the value of $b$ given in \eqref{s-00065} and calculating the Schwarzian derivative $f_{z_0,p}(z_0)$ by using $p^2=1$, we get
$$S_{f_{z_0,p}}(z_0)=2ie^{-2i\alpha}\cos{\alpha}\left(\frac{p+(2-z_0^2)\sin{\alpha}+p(1-z_0^2)\sin^2{\alpha}}{(1-z_0^2)^2(1+p\sin{\alpha})^2}\right).$$
Thus, for any $p$ given in \eqref{s-00055}, we have
\begin{equation}\label{s-00070}
|S_{f_{z_0,p}}(z_0)|=\dfrac{2\cos{\alpha}(1-(1-|z_0|^2)\sin{|\alpha|})}{(1-|z_0|^2)^2(1-\sin{|\alpha|})}.
\end{equation}
This shows that the inequality \eqref{s-00015} and the first inequality of \eqref{s-00020} are sharp for real $z$.\\

Now, we discuss the sharpness of the second inequality of \eqref{s-00020}. To do this, let us consider the function $f_0(z)$ given by
\begin{equation}\label{s-00075}
e^{i\alpha}\left(1+\frac{zf_0''(z)}{f_0'(z)}\right)=\frac{e^{i\alpha}+e^{-i\alpha}z}{1-z}.
\end{equation}
Using \eqref{s-00038} with $\omega(z)=z$, the Schwarzian derivative of $f_0(z)$ is given by
$$S_{f_0}(z)=\frac{2ie^{-2i\alpha}\sin{\alpha}\cos{\alpha}}{(1-z)^2}.$$
Therefore, for any $|\alpha|>\pi/6$ and $|z_0|\ge\delta$ with $0< z_0< 1$, we have
$$|S_{f_0}(z_0)|=\frac{2\cos{\alpha}\sin{|\alpha|}}{(1-|z_0|)^2},$$
which shows that the estimate of the second inequality of \eqref{s-00020} is sharp for real $z$.\\

The above discussion shows that the estimate of the Schwarzian derivative $|S_f(z)|$ obtained in Theorem \ref{Thm-s-0005} is sharp for certain real values of $z$, which will help us to prove the estimate of the Schwarzian norm $||S_f||$ is sharp for the functions in the class $\mathcal{S}_{\alpha}$.

\begin{thm}\label{Thm-s-00010}
For $-\pi/2<\alpha<\pi/2$, let $f\in\mathcal{S}_{\alpha}$. Then the Schwarzian norm of $f$ satisfy the following inequality
$$||S_f||\le
\begin{cases}
\dfrac{2\cos{\alpha}}{1-\sin{|\alpha|}}&~\text{if}~|\alpha|\le\dfrac{\pi}{6},\\[3mm]
8\cos{\alpha}\sin{|\alpha|} &~\text{if}~|\alpha|>\dfrac{\pi}{6}.
\end{cases}
$$
Moreover, the estimates are best possible.
\end{thm}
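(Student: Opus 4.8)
The plan is to read the Schwarzian norm directly off the pointwise estimates of Theorem \ref{Thm-s-0005}. Since $\|S_f\|=\sup_{z\in\mathbb{D}}(1-|z|^2)^2|S_f(z)|$, I multiply the bounds \eqref{s-00015} and \eqref{s-00020} by $(1-|z|^2)^2$ and maximize the resulting functions of the single variable $r:=|z|\in[0,1)$. Write $\Phi(r)=\frac{2\cos\alpha\,(1-(1-r^2)\sin|\alpha|)}{1-\sin|\alpha|}$ for the normalized version of the first branch.

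For $|\alpha|\le\pi/6$ the estimate \eqref{s-00015} gives $(1-|z|^2)^2|S_f(z)|\le\Phi(r)$ for all $r$. Because $\sin|\alpha|\ge 0$, the quantity $1-(1-r^2)\sin|\alpha|$ is increasing in $r$, so $\Phi$ is increasing on $[0,1)$ and $\sup_{r<1}\Phi(r)=\lim_{r\to 1^-}\Phi(r)=\frac{2\cos\alpha}{1-\sin|\alpha|}$, which is the asserted bound. For $|\alpha|>\pi/6$ the bound splits at $\delta$. On $[0,\delta)$ the same $\Phi$ applies and is increasing; at $r=\delta$ I use the identities coming from $h(\delta)=0$, namely $(1-\delta^2)\sin|\alpha|=1-\delta$ and $\sin|\alpha|(1+\delta)=1$, to evaluate $\Phi(\delta)=\frac{2\cos\alpha\,\delta}{1-\sin|\alpha|}=\frac{2\cos\alpha}{\sin|\alpha|}$. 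On $[\delta,1)$ the second line of \eqref{s-00020} multiplied by $(1-|z|^2)^2$ becomes $2\cos\alpha\sin|\alpha|(1+r)^2$, which is increasing, equals $2\cos\alpha\sin|\alpha|(1+\delta)^2=\frac{2\cos\alpha}{\sin|\alpha|}$ at $r=\delta$ (so the two branches match there, making the global bound continuous and monotone), and tends to $8\cos\alpha\sin|\alpha|$ as $r\to 1^-$. Hence the overall supremum in this case is $8\cos\alpha\sin|\alpha|$.

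For sharpness I reuse the extremal functions already constructed after Theorem \ref{Thm-s-0005}. When $|\alpha|\le\pi/6$, take the functions $f_{z_0,p}$ with real $z_0\to 1$: by \eqref{s-00070}, $(1-z_0^2)^2|S_{f_{z_0,p}}(z_0)|=\frac{2\cos\alpha\,(1-(1-z_0^2)\sin|\alpha|)}{1-\sin|\alpha|}\to\frac{2\cos\alpha}{1-\sin|\alpha|}$, and since this quantity is squeezed between $0$ and the just-proved upper bound $\|S_{f_{z_0,p}}\|\le\frac{2\cos\alpha}{1-\sin|\alpha|}$, the norms converge to the bound, so it cannot be improved. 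When $|\alpha|>\pi/6$, the single function $f_0$ of \eqref{s-00075}, for which $S_{f_0}(z)=\frac{2ie^{-2i\alpha}\sin\alpha\cos\alpha}{(1-z)^2}$, gives $(1-|z|^2)^2|S_{f_0}(z)|=2\cos\alpha\sin|\alpha|\,\frac{(1-|z|^2)^2}{|1-z|^2}$; minimizing $|1-z|$ along the real axis and letting $z\to 1^-$ yields $\sup_{z\in\mathbb{D}}(1-|z|^2)^2|S_{f_0}(z)|=8\cos\alpha\sin|\alpha|$, so $\|S_{f_0}\|=8\cos\alpha\sin|\alpha|$ exactly.

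The main obstacle is the bookkeeping at the junction $r=\delta$ in the case $|\alpha|>\pi/6$: I must verify that the two branches of the maximized bound agree there and that the combined piecewise function is monotone on $[0,1)$, which rests entirely on the algebraic relations $\sin|\alpha|(1+\delta)=1$ and $(1-\delta^2)\sin|\alpha|=1-\delta$ extracted from $h(\delta)=0$. Once these are in hand, every remaining step is an elementary monotonicity argument in $r$, and the comparison that selects $8\cos\alpha\sin|\alpha|$ over $\frac{2\cos\alpha}{\sin|\alpha|}$ is immediate from the monotonicity of the second branch on $[\delta,1)$.
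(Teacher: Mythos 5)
Your proposal is correct and follows essentially the same route as the paper: multiply the pointwise bounds of Theorem \ref{Thm-s-0005} by $(1-|z|^2)^2$, take suprema branch by branch (the paper compares $M_1=\frac{2\cos\alpha}{\sin|\alpha|}$ and $M_2=8\cos\alpha\sin|\alpha|$ directly, while you observe the two branches match at $r=\delta$ via $\sin|\alpha|(1+\delta)=1$ and that the combined bound is monotone --- the same algebra), and establish sharpness with the same extremal functions $f_{z_0,p}$ (as $z_0\to1^-$) and $f_0$.
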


\begin{proof}
We prove the theorem by considering two different cases.\\

\noindent \textbf{Case-I: } Let $|\alpha|\le\pi/6$. Then from \eqref{s-00015}, the Schwarzian derivative of $f$ satisfies the following inequality
$$|S_f(z)|\le \dfrac{2\cos{\alpha}(1-(1-|z|^2)\sin{|\alpha|})}{(1-|z^2|)^2(1-\sin{|\alpha|})}.$$
Therefore,
\begin{align*}
||S_f||&=\sup\limits_{z\in\mathbb{D}}(1-|z|^2)^2|S_f(z)|\\
&\le 2\cos{\alpha}\sup\limits_{0\le |z|<1}\frac{1-(1-|z|^2)\sin{|\alpha|}}{1-\sin{|\alpha|}}\\
&=\frac{2\cos{\alpha}}{1-\sin{|\alpha|}}.
\end{align*}
To show that the estimate is best possible, we consider the function $f_{z_0,p}(z)\in\mathcal{S}_{\alpha}$, $-1<z_0<1$ defined by \eqref{s-00060}. From \eqref{s-00070}, we obtain
$$(1-|z_0|^2)^2|S_{f_{z_0,p}}(z_0)|=\frac{2\cos{\alpha}(1-(1-|z_0|^2)\sin{|\alpha|})}{1-\sin{|\alpha|}}\rightarrow \frac{2\cos{\alpha}}{1-\sin{|\alpha|}}\quad\text{as }z_0\rightarrow 1^-.$$
This shows that the estimate is best possible.\\

\noindent \textbf{Case-II: } Let $|\alpha|>\pi/6$. Then from \eqref{s-00020}, the Schwarzian derivative of $f$ satisfies the following inequality
\begin{equation*}
|S_f(z)|\le
\begin{cases}
\dfrac{2\cos{\alpha}(1-(1-|z|^2)\sin{|\alpha|})}{(1-|z^2|)^2(1-\sin{|\alpha|})}& ~\text{for}~|z|<\delta,\\[3mm]
\dfrac{2\cos{\alpha}\sin{|\alpha|}}{(1-|z|)^2}&~ \text{for}~|z|\ge\delta,
\end{cases}
\end{equation*}
where $\delta$ is given by \eqref{s-00025}. Therefore,
\begin{equation}\label{s-00080}
||S_f||=\sup\limits_{z\in\mathbb{D}}(1-|z|^2)^2|S_f(z)|=\max\{M_1,M_2\},
\end{equation}
where
\begin{align*}
M_1&=2\cos{\alpha}\sup\limits_{0\le |z|<\delta}\dfrac{1-(1-|z|^2)\sin{|\alpha|}}{1-\sin{|\alpha|}}\\
&=\frac{2\cos{\alpha}(1-(1-\delta^2)\sin{|\alpha|})}{1-\sin{|\alpha|}},
\end{align*}
and
\begin{align*}
M_2&=2\cos{\alpha}\sup\limits_{\delta\le|z|<1}(1+|z|)^2\sin{|\alpha|}\\&=8\cos{\alpha}\sin{|\alpha|}.
\end{align*}
Since $\delta$ is a root of the equation $h(|z|)=0$, and therefore, $1-(1-\delta^2)\sin{|\alpha|}=\delta$. A simple calculation gives
$$M_1=\frac{2\delta\cos{\alpha}}{1-\sin{|\alpha|}}=\frac{2\cos{\alpha}}{\sin{|\alpha|}}\le 4\cos{\alpha}<8\cos{\alpha}\sin{|\alpha|}=M_2.$$
Thus, from \eqref{s-00080}, we get the desired result.\\

Now, we show that the estimate is sharp. Let us consider the function $f_0\in\mathcal{S}_{\alpha}$ defined by \eqref{s-00075}. Then the Schwarzian derivative of $f_0$ is given by
$$S_{f_0}(z)=\frac{2\cos{\alpha}\sin{\alpha}}{(1-z)^2}.$$
Therefore,
$$||S_{f_0}||=\sup\limits_{z\in\mathbb{D}}(1-|z|^2)^2|S_{f_0}(z)|=2\cos{\alpha}\sin{|\alpha|}\sup\limits_{z\in\mathbb{D}}\frac{(1-|z|^2)^2}{|1-z|^2}.$$
On the positive real axis, we have
$$\sup\limits_{0\le t<1}\frac{(1-t^2)^2}{(1-t)^2}=4.$$
Thus,
$$||S_{f_0}||=8\cos{\alpha}\sin{|\alpha|}.$$
\end{proof}

For the particular value $\alpha=0$, one can obtain the Schwarzian norm for functions in the class $\mathcal{C}$, which was first obtained by Robertson \cite{Robertson-1969}.

\begin{cor}
If $f\in\mathcal{S}_0=:\mathcal{C}$, then the Schwarzian norm satisfies the sharp inequality $$||S_f||\le 2.$$
\end{cor}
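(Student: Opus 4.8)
The plan is to read the corollary off as the special case $\alpha=0$ of Theorem~\ref{Thm-s-00010}. Since $\mathcal{S}_0=\mathcal{C}$ and $|\alpha|=0\le\pi/6$, the first branch of the bound in Theorem~\ref{Thm-s-00010} applies, and substituting $\alpha=0$ gives $\|S_f\|\le 2\cos 0/(1-\sin 0)=2$. Thus the inequality itself demands no new argument beyond evaluating the Case~I estimate at $\alpha=0$.

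The one place that requires a moment's care, and which I would flag as the main (if minor) obstacle, is the sharpness. The natural candidate is the function $f_0$ of \eqref{s-00075}, which realised equality in Case~II of Theorem~\ref{Thm-s-00010}; but at $\alpha=0$ its Schwarzian derivative $2\cos\alpha\sin\alpha/(1-z)^2$ vanishes identically, so $f_0$ collapses to a M\"obius map with $S_{f_0}\equiv 0$ and cannot exhibit sharpness. I would therefore turn instead to the Case~I extremal family $f_{z_0,p}$ of \eqref{s-00060}. Substituting $\alpha=0$ (hence $p=-1$) into \eqref{s-00070} yields $|S_{f_{z_0,p}}(z_0)|=2/(1-|z_0|^2)^2$, so that $(1-|z_0|^2)^2|S_{f_{z_0,p}}(z_0)|=2$ for every $z_0\in(-1,1)$. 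This attains the bound and confirms that $2$ is best possible.

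As a concrete and self-contained alternative, I would simply exhibit the convex function $f(z)=\tfrac12\log\frac{1+z}{1-z}$. Here $1+zf''(z)/f'(z)=(1+z^2)/(1-z^2)$ has positive real part on $\mathbb{D}$, so $f\in\mathcal{C}$, and a direct computation gives $S_f(z)=2/(1-z^2)^2$, whence $\|S_f\|=2$. This familiar strip mapping makes the extremality transparent without invoking the Blaschke-product construction, and it also recovers the classical Robertson value, matching the reference cited immediately before the corollary.
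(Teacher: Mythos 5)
Your proposal is correct and matches the paper's (implicit) argument: the corollary is just Theorem \ref{Thm-s-00010} specialized to $\alpha=0$, and since $0\le\pi/6$ falls under Case~I, sharpness is already covered there by the family $f_{z_0,p}$ of \eqref{s-00060} via \eqref{s-00070} --- the degenerate $f_0$ is never invoked for that case, so the obstacle you flag does not actually arise in the paper's structure, though you resolve it correctly. Your explicit strip mapping $\tfrac12\log\frac{1+z}{1-z}$ with $S_f(z)=2/(1-z^2)^2$ is a pleasant self-contained witness of extremality, but it adds nothing beyond what the theorem already supplies.
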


In the next theorem, we estimate the pre-Schwarzian norm for functions in the class $\mathcal{S}_{\alpha}$.

\begin{thm}\label{Thm-s-00015}
For $-\pi/2<\alpha<\pi/2$, let $f\in\mathcal{S}_{\alpha}$. Then the pre-Schwarzian norm satisfy the following sharp inequality
$$||P_f||\le 4\cos{\alpha}.$$
\end{thm}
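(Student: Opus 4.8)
The plan is to read off the pre-Schwarzian derivative directly from the representation \eqref{s-00030} already established in the proof of Theorem \ref{Thm-s-0005}. Writing $f\in\mathcal{S}_{\alpha}$ in terms of its Schwarz function $\omega\in\mathcal{B}_0$, that identity gives $P_f(z)=f''(z)/f'(z)=2e^{-i\alpha}\cos\alpha\,\omega(z)/\bigl(z(1-\omega(z))\bigr)$, so that
$$|P_f(z)|=\frac{2\cos\alpha\,|\omega(z)|}{|z|\,|1-\omega(z)|}.$$
The whole problem then reduces to bounding the elementary quantity $|\omega(z)|/|1-\omega(z)|$ in a way that is compatible with the weight $(1-|z|^2)$.

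First I would control the denominator by the triangle inequality $|1-\omega(z)|\ge 1-|\omega(z)|$, and then invoke Schwarz's lemma $|\omega(z)|\le|z|$. Setting $r=|z|$ and $s=|\omega(z)|\le r$, this yields
$$(1-|z|^2)|P_f(z)|\le \frac{2\cos\alpha\,(1-r^2)}{r}\cdot\frac{s}{1-s}.$$
Since $s\mapsto s/(1-s)$ is increasing on $[0,1)$, I would replace $s$ by its maximal value $r$; the factor $r$ then cancels and $(1-r^2)/(1-r)=1+r$, leaving the clean bound $(1-|z|^2)|P_f(z)|\le 2\cos\alpha\,(1+r)<4\cos\alpha$. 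Taking the supremum over $z\in\mathbb{D}$ gives $\|P_f\|\le 4\cos\alpha$.

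For sharpness I would reuse the boundary function $f_0$ introduced in \eqref{s-00075}, namely the one with $\omega(z)=z$, for which $P_{f_0}(z)=2e^{-i\alpha}\cos\alpha/(1-z)$. Evaluating along the positive real axis $z=t\in(0,1)$ gives $(1-t^2)|P_{f_0}(t)|=2\cos\alpha\,(1+t)\to 4\cos\alpha$ as $t\to 1^-$, so $\|P_{f_0}\|=4\cos\alpha$ and the estimate cannot be improved.

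Because \eqref{s-00030} does all the structural work, there is no genuine obstacle here; the only point requiring care is the order in which the two elementary estimates are combined. One must bound $|1-\omega(z)|$ from below and then apply the monotonicity of $s/(1-s)$ in the correct direction, so that the numerator exactly cancels the $1/|z|$ factor; carelessly estimating $|\omega(z)|$ before handling $1/|z|$ would leave an apparent singularity at the origin. The sharpness discussion also confirms that both inequalities, $|1-\omega(z)|\ge 1-|\omega(z)|$ and $|\omega(z)|=|z|$, are simultaneously attained in the limit precisely along the positive real axis for $\omega(z)=z$, which is why the extremal direction is forced to be real.
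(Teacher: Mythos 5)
Your proposal is correct and follows essentially the same route as the paper: both read off $P_f$ from the subordination identity \eqref{s-00030}, combine the Schwarz lemma bound $|\omega(z)|\le|z|$ with $|1-\omega(z)|\ge 1-|\omega(z)|$ to get $(1-|z|^2)|P_f(z)|\le 2\cos\alpha\,(1+|z|)$, and verify sharpness with the function $f_0$ of \eqref{s-00075}. Your write-up merely makes explicit the monotonicity step that the paper leaves implicit.
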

\begin{proof}
For $-\pi/2<\alpha<\pi/2$, let $f\in\mathcal{S}_{\alpha}$. Then from \eqref{s-00030}, the pre-Schwarzian derivative of $f$ is given by
$$P_f(z)=\frac{f''(z)}{f'(z)}=\frac{2e^{-i\alpha}\cos{\alpha}\omega(z)}{z(1-\omega(z))},$$
where $\omega\in\mathcal{B}_0$. Therefore,
\begin{align*}
||P_f||&=\sup\limits_{z\in\mathbb{D}}(1-|z|^2)|P_f(z)|\\&\le 2\cos{\alpha}\sup\limits_{0\le |z|<1}(1+|z|)\\
&=4\cos{\alpha}.
\end{align*}
It is easy to verify that the equality occur for the function $f_0\in \mathcal{S}_{\alpha}$ defined by \eqref{s-00075}.

\end{proof}

For the particular value $\alpha=0$, one can obtain the pre-Schwarzian norm for the functions in the class $\mathcal{C}$, which was first proved by Yamashita \cite{Yamashita-1999}.

\begin{cor}
If $f\in\mathcal{S}_0=:\mathcal{C}$, then the pre-Schwarzian norm satisfies the sharp inequality $$||P_f||\le 4.$$
\end{cor}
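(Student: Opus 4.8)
The plan is to reduce everything to the Schwarz-function representation of the pre-Schwarzian derivative that is already available from the proof of Theorem~\ref{Thm-s-0005}. Since $f\in\mathcal{S}_\alpha$, the subordination \eqref{s-00010} supplies a function $\omega\in\mathcal{B}_0$ with
$$e^{i\alpha}\left(1+\frac{zf''(z)}{f'(z)}\right)=\frac{e^{i\alpha}+e^{-i\alpha}\omega(z)}{1-\omega(z)},$$
and a direct simplification (this is exactly \eqref{s-00030}) gives
$$P_f(z)=\frac{f''(z)}{f'(z)}=\frac{2e^{-i\alpha}\cos\alpha\,\omega(z)}{z(1-\omega(z))}.$$
Thus the whole problem collapses to estimating the single scalar quantity $(1-|z|^2)|\omega(z)|/(|z|\,|1-\omega(z)|)$, with the constant $2\cos\alpha$ carried out front.

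The core estimate is elementary. Writing $s=|\omega(z)|$ and $r=|z|$, Schwarz's lemma forces $s\le r$, while the triangle inequality gives the lower bound $|1-\omega(z)|\ge 1-s$. I would then factor $1-|z|^2=(1-r)(1+r)$ and observe that the target bound $1+|z|$ is equivalent to the inequality $(1-r)s\le r(1-s)$, which rearranges to $s\le r$ — precisely what Schwarz's lemma already grants. Hence
$$(1-|z|^2)\,\frac{|\omega(z)|}{|z|\,|1-\omega(z)|}\le 1+|z|,$$
and taking the supremum over $0\le|z|<1$ yields $\|P_f\|\le 2\cos\alpha\cdot 2=4\cos\alpha$.

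For sharpness I would take the boundary function $f_0$ from \eqref{s-00075}, which corresponds to the extremal choice $\omega(z)=z$. For this function $P_{f_0}(z)=2e^{-i\alpha}\cos\alpha/(1-z)$, so on the positive real axis $(1-t^2)|P_{f_0}(t)|=2\cos\alpha\,(1+t)\to 4\cos\alpha$ as $t\to 1^-$, showing that the constant cannot be lowered.

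I do not anticipate a serious obstacle here. Unlike the Schwarzian case of Theorem~\ref{Thm-s-0005}, no Dieudonn\'e-type variability region and no case analysis in $\alpha$ are needed, because the pre-Schwarzian depends on $\omega$ only through $|\omega(z)|$ and not through $\omega'(z)$. The one point that rewards care is the denominator: one must combine $|1-\omega(z)|\ge 1-|\omega(z)|$ with Schwarz's lemma in the correct order, since a premature bound (for instance replacing $|1-\omega(z)|$ by $1-|z|$ too early) would produce a quantity that appears to blow up as $|z|\to 1$ and would obscure the clean value $1+|z|$.
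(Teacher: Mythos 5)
Your proof is correct and follows essentially the same route as the paper: it uses the representation $P_f(z)=2e^{-i\alpha}\cos\alpha\,\omega(z)/(z(1-\omega(z)))$ from \eqref{s-00030}, bounds it via Schwarz's lemma and $|1-\omega(z)|\ge 1-|\omega(z)|$ to get $(1-|z|^2)|P_f(z)|\le 2\cos\alpha(1+|z|)$, and verifies sharpness with the function $f_0$ of \eqref{s-00075}. Setting $\alpha=0$ gives the corollary exactly as the paper deduces it from Theorem~\ref{Thm-s-00015}.
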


\vspace{4mm}
\textbf{Data availability:} Data sharing not applicable to this article as no data sets were generated or analyzed during the current study.\\

\textbf{Authors contributions:}  All authors contributed equally to the investigation of the problem and the order of the authors is given alphabetically according to the surname. All authors read and approved the final manuscript.\\

\textbf{Acknowledgement:} The second named author thanks the University Grants Commission for the financial support through UGC Fellowship (Grant No. MAY2018-429303).

\end{document}